\documentclass[12pt]{article}

\usepackage{amssymb}
\usepackage{amsfonts}
\usepackage{amsmath}
\usepackage{amsthm}
\usepackage[margin=1.00in]{geometry}
\usepackage{enumerate}
\usepackage{amstext}
\usepackage{layout}

\numberwithin{equation}{section}
\newtheorem{theorem}{Theorem}

\newtheorem{lemma}{Lemma}

\newtheorem{remark}{Remark}
\newtheorem{problem}{Problem}

\begin{document}

\begin{center}
{\Large \bf Some Results on the Central Limit Theorem for Subsequences in Banach Spaces}

\vskip 0.5cm

{\bf Deli Li$^{1}$\footnote{Corresponding author: Deli Li
	(email: dli@lakeheadu.ca)}} and Han-Ying Liang$^{2}$

\vskip 0.2cm

$^1$Department of Mathematical Sciences, Lakehead
University, Thunder Bay, Ontario, Canada\\
$^2$Department of Mathematics, Tongji University, Shanghai, China
\end{center}

\noindent {\bf Abstract}. Let $\{X, X_{n}; n \geq 1 \}$ be a 
sequence of i.i.d. $\mathbf{B}$-valued random variables and 
set $S_{n} = \sum_{i=1}^{n}X_{i},~n \geq 1$. 
This note is devoted to study the classical central limitr theorem for 
subsequences of sums of i.i.d. $\mathbf{B}$-valued random variables. We show that, under the assumption that $\mathbf{B}$ 
is of cotype $2$ space, $\left(\frac{S_{n}}{\sqrt{n}} \right)_{n \geq 1}$ converges weakly if and only if
$\left(\frac{S_{m_{n}}}{\sqrt{{m_{n}}}} \right)_{n \geq 1}$ converges weakly for a subsequence $\{m_{n}; ~n \geq 1\}$ of positive integers.
We conjecture that this result is false if $\mathbf{B}$ is not of cotype $2$ space. In addition, we show that, if $\left(\frac{S_{m_{n}}}{\sqrt{{m_{n}}}} \right)_{n \geq 1}$ converges weakly for a subsequence $\{m_{n}; ~n \geq 1\}$ of positive integers. and $\left(\frac{S_{n}}{\sqrt{n}} \right)_{n \geq 1}$ does not converge weakly, then $\displaystyle \left(S_{n}/a_{n} \right)_{n \geq 1}$ does not converge weakly to a non-degenerate probability measure for 
any sequence $\{a_{n}; n \geq 1 \}$ of positive real numbers.

\medskip

\noindent {\bf Keywords}~~Central limit theorem $\cdot$ Cotype $2$ space $\cdot$
Hilbert space $\cdot$ Subseqences $\cdot$ Sums of i.i.d.
B-valued random variables 

\medskip

\noindent {\bf MSC}: 60F05; 60B12; 60G50

\section{Introduction}

This note is devoted to study the classical central limitr theorem for 
subsequences of sums of independent and identically distributed (i.i.d.) 
$\mathbf{B}$-valued random variables. 
To better explain the motivation behind this work, we will first introduce 
some concepts and definitions. Throughout this note, let 
$(\mathbf{B}, \| \cdot \| )$ be a real separable Banach space with topological 
dual space $\mathbf{B}^{*}$,  $(\Omega, \mathcal{F}, \mathbb{P})$ a rich 
complete probability space, and $\mathcal{B}$ the Borel $\sigma$-algebra 
generated by the class of open subsets of $\mathbf{B}$ determined by $\|\cdot\|$. 
A {\bf B}-valued random variable $X$ is defined as a measurable function from 
$(\Omega, \mathcal{F})$ into $(\mathbf{B}, \mathcal{B})$. The symbol 
$N(0, \sigma^{2})$ stands for a real-valued normal distribution with mean 
of $0$ and variance of $\sigma^{2}$. Let $\mathbb{N}$ be the set of all 
positive integers. We say that $\{m_{n}; ~n \in \mathbb{N}\}$ is a subsequence 
of $\mathbb{N}$ if $m_{n} \in \mathbb{N}$ and $m_{n} < m_{n+1}$ for all 
$n \in \mathbb{N}$. Let $\{X, X_{n};~n \in \mathbb{N} \}$ be a sequence 
of i.i.d. $\mathbf{B}$-valued random variables and, as usual, 
set $S_{n} = \sum_{i=1}^{n}X_{i},~n \in \mathbb{N}$. 
We say that $X$ satisfies the classical central limit 
theorem (and write $X \in \mbox{CLT}$) if
\[
\left(\frac{S_{n}}{\sqrt{n}} \right)_{n \in \mathbb{N}} ~\mbox{converges weakly}.
\]
We say that $X$ satisfies the central limit theorem 
for subsequence (and write $X \in \mbox{CLTSS}$) if 
\[
\left(\frac{S_{m_{n}}}{\sqrt{{m_{n}}}} \right)_{n \in \mathbb{N}} ~
\mbox{converges weakly for a subsequence}~\{m_{n}; ~n \in \mathbb{N} \}
~\mbox{of}~\mathbb{N}.
\]

The classical CLT of the sums of i.i.d. $\mathbf{B}$-valued random variables 
has been deeply studied by many mathematicians, and many profound results have 
been obtained. Regarding this topic, Ledoux and Talagrand's book [10] provides 
a comprehensive treatment of the classical CLT in the context of Banach spaces 
and lists almost all authors who have made significant contributions to the 
classical CLT in a Banach space setting.

As is well known, in mathematical analysis, if the entire sequence converges, 
then every subsequence must converge; conversely, the convergence of a certain 
subsequence does not necessarily mean that the entire sequence converges.
There are some studies in the literature on the limit theorems for subsequences 
of partial sums of i.i.d. (real-valued or $\mathbf{B}$-valued) random variables. 
For example, Gut [4] studied complete convergence in the law of large numbers 
for subsequences, Chow {\it et al.} [2], Fukuyama and Takeuchi [3], 
Gut [5], Qualls [12], Schwabe and Gut [13], Weber [15],  and among others studied the 
law of the iterated logarithm for subsequences, Li {\it et al.} [11] and Song [14]
studied the CLT for subsequences, etc. 

Let's look at the situation corresponding to the law of large numbers. 
Let $0 < p < 2$. We say that $X$ satisfies the strong law of large numbers
(resp., the weak law of large numbers) for the normalize sequence $\{n^{1/p};~ n \in \mathbb{N} \}$
(and write $X \in \mbox{SLLN}_{p}$ (resp., $X \in \mbox{WLLN}_{p}$))
if 
\[
\frac{S_{n}}{n^{1/p}} \stackrel{\mbox{a.s.}}{\longrightarrow} 0 
~\left(\mbox{resp.}, \frac{S_{n}}{n^{1/p}} \stackrel{\mathbb{P}}{\longrightarrow} 0 \right).
\]
Here and below, ``$\stackrel{\mbox{a.s.}}{\longrightarrow}$" and 
``$\stackrel{\mathbb{P}}{\longrightarrow}$" stand for convergence 
in almost surely and convergence in probability, respectively.
 We say that $X$ satisfies the strong law of large numbers
(resp., the weak law of large numbers) for a subsequence of the normalize sequence $\{n^{1/p};~ n \in \mathbb{N} \}$
(and write $X \in \mbox{SLLNSS}_{p}$ (resp., $X \in \mbox{WLLNSS}_{p}$))
if 
\[
\frac{S_{m_{n}}}{m_{n}^{1/p}} \stackrel{\mbox{a.s.}}{\longrightarrow} 0 
~\left(\mbox{resp.}, \frac{S_{m_{n}}}{m_{n}^{1/p}} \stackrel{\mathbb{P}}{\longrightarrow} 0 \right) 
~\mbox{for a subsequence}~\{m_{n}; ~n \in \mathbb{N} \}
~\mbox{of}~\mathbb{N}.
\]
For each given $0 < p < 2$,  it is well-known that, for any Banach space $\mathbf{B}$,
\[
X \in \mbox{SLLN}_{p} \Rightarrow X \in \mbox{SLLNSS}_{p}
~~\mbox{and}~~X \in \mbox{SLLN}_{p} \nLeftarrow X \in \mbox{SLLNSS}_{p}
\]
and
\[
X \in \mbox{WLLN}_{p} \Rightarrow X \in \mbox{WLLNSS}_{p}
~~\mbox{and}~~X \in \mbox{WLLN}_{p} \nLeftarrow X \in \mbox{WLLNSS}_{p}.
\]
However, for the CLT, the situation is somewhat unexpected. For $\mathbf{B} = (-\infty, \infty)$, 
it follows from Proposition 1 of Song [14] that 
\begin{equation}
X \in \mbox{CLTSS} ~~\mbox{if and only if}~~\mathbb{E}X = 0~\mbox{and}~ \mathbb{E}X^{2} < \infty \tag{1}
\end{equation}
and hence,
\begin{equation}
X \in \mbox{CLT} ~~\mbox{if and only if}~~X \in \mbox{CLTSS}. \tag{2}
\end{equation}
Motivated by Proposition 1 of Song [14], Li {\it el al.} [11] studied the CLT 
for subsequences when $\{\sqrt{n}; n \in \mathbb{N} \}$ is replaced by
$\{\sqrt{na_{n}};~n \in \mathbb{N} \}$ with $\lim_{n \rightarrow \infty}
a_{n} = \infty$. Theorem 2.1 of Li {\it et al.} [11] asserts that, for given positive nondecreasing
sequence $\{a_{n}; n \in \mathbb{N} \}$ with $\lim_{n \rightarrow \infty}
a_{n} = \infty$ and $\lim_{n \rightarrow \infty} a_{n+1}/a_{n} = 1$
and given nondecreasing function $h(\cdot):~(0, \infty) \rightarrow
(0, \infty)$ with $\lim_{x \rightarrow \infty} h(x) = \infty$, there
exists a sequence $\{X, X_{n}; n \in \mathbb{N} \}$ of symmetric real-valued i.i.d. random
variables such that $\mathbb{E}h(|X|) = \infty$ and
$\displaystyle S_{n_{k}}/\sqrt{n_{k}a_{n_{k}}} \Rightarrow N(0, 1)$
for some subsequence
$\{n_{k}; k \in \mathbb{N} \}$ of $\mathbb{N}$.
In particular, for given $0 < p < 2$ and given nondecreasing function
$h(\cdot): (0, \infty) \rightarrow (0, \infty)$ with $\lim_{x
	\rightarrow \infty} h(x) = \infty$, there exists a sequence $\{X,
X_{n}; n \in \mathbb{N} \}$ of symmetric i.i.d. random variables such that
$\mathbb{E}h(|X|) = \infty$ and $\displaystyle S_{n_{k}}/n_{k}^{1/p}
\Rightarrow N(0, 1)$ for some subsequence $\{n_{k}; k \in \mathbb{N} \}$ of $\mathbb{N}$.

From Proposition 1 of Song [14], it is easy to see that, (1) and (2) 
hold for any finite-dimensional Banach space $\mathbf{B}$. Let $\mathbf{c}_{0}$ 
be the Banach space of all sequences of real numbers converging to zero equipped 
with the uniform norm. Interestingly, Song [14] constructed a sequence of 
$\mathbf{c}_{0}$-valued i.i.d. random variable $\{X, X_{n}; n \in \mathbb{N} \}$ 
such that 
\[
X \in \mbox{CLTSS}, ~\mbox{but}~ X \notin \mbox{CLT}.
\]

Motivated by the facts mentioned above, we now address the following two problems. 

\begin{problem}
For any infinite-dimensional separable Banach space $\mathbf{B}$, are (1) and (2) not true? 
\end{problem}

\begin{problem}
As we know, $\mathbb{E}X = 0$ and $\mathbb{E}\|X\|^{2} < \infty$ are neither sufficient 
nor necessary for $X \in \mbox{CLT}$ to hold (see, e.g., Ledoux and Talagrand [10, Chapter 10]). 
Since $X \in \mbox{CLT} \Rightarrow X \in \mbox{CLTSS}$ and, 
from the counterexample constructed by Song [14], $X \in \mbox{CLT} \nLeftarrow X \in \mbox{CLTSS}$, 
are $\mathbb{E}X = 0$ and $\mathbb{E}\|X\|^{2} < \infty$ also neither sufficient 
nor necessary for $X \in \mbox{CLTSS}$ to hold?
\end{problem}

Let $\{a_{n}; ~ n \in \mathbb{N} \}$ be a nondecreasing sequence of positive real numbers such that
\[
\liminf_{n \rightarrow \infty} \frac{a_{n}}{\sqrt{n}} = 1~~\mbox{and}
~~\limsup_{n \rightarrow \infty}\frac{a_{n}}{\sqrt{n}} = \infty.
\]
Let $\{\Phi_{n}; n \in \mathbb{N}\}$ be a sequence of real-valued random variables such that,
for each $n \in \mathbb{N}$, the probability distribution of $\Phi_{n}$ is given by $N\left(0, a^{2}_{n} \right)$. Then, one can easily see that
\[
\left(\frac{\Phi_{k_{n}}}{\sqrt{k_{n}}}\right)_{n \in \mathbb{N}} \mbox{converges weakly to}~N(0, 1)~\mbox{for some subsequence}~\{k_{n}; ~n \in \mathbb{N} \}
~\mbox{of}~\mathbb{N}
\]
and 
\[
\left(\frac{\Phi_{n}}{\sqrt{n}}\right)_{n \in \mathbb{N}} \mbox{does not converge weakly to any real-valued random variable.}
\]
Clearly, if $\left \{\sqrt{n}; ~n \in \mathbb{N} \right \}$ is replaced by 
$\left \{a_{n}; ~n \in \mathbb{N} \right \}$, then we have
\[
\left(\frac{\Phi_{n}}{a_{n}}\right)_{n \in \mathbb{N}} \mbox{converges weakly to}~ N(0, 1).
\]
Thus, motivated by this example, our next problem can be addressed as follows. 

\begin{problem}
For the counterexample constructed by Song [14], although
$\left(\frac{S_{n}}{\sqrt{n}} \right)_{n \in \mathbb{N}}$ does not 
converge weakly, is it possible that 
$\left(\frac{S_{n}}{a_{n}} \right)_{n \in \mathbb{N}}$
does converge weakly to a nondegenerate measure for some normalize sequence
$\{a_{n}; n \in \mathbb{N} \}$ of positive numbers?
\end{problem}

The plan of this note is as follows. Our main results, Theorems 1 and 2
are stated in Section 2. From Theorems 1 and 2, we see that the answers to Problems 1 and 3 
above are negative. Combining our Theorem 1 (ii) and a theorem established by Jain [8], 
the answer to Problem 2 above is positive. The proofs of Theorems 1 and 2 are given in Sections 3 and 4 respectively. The main findings of this note and their implications are summarized in Section 5.

\section{Statement of the Main Results}

In this section, we present the main results of this note. For the concept of Banach space types (especially cotype $2$ and type $2$), see the book of Ledoux and Talagrand [10].

\begin{theorem}
Let $X$ be a $\mathbf{B}$-valued random variable. 
Then we have the following two conclusions.

\noindent
{\bf (i)}. If $\mathbf{B}$ is of both cotype $2$ and type $2$ space, then 
\begin{equation}
X \in \mbox{CLT} \Leftrightarrow X \in \mbox{CLTSS} 
\Leftrightarrow \mathbb{E}X = 0 ~\mbox{and}~ \mathbb{E} \|X\|^{2} < \infty.
\tag{3}
\end{equation}

\noindent
{\bf (ii)}. If $\mathbf{B}$ is of both cotype $2$ and non-type $2$ space, then 
\begin{equation}
X \in \mbox{CLT} \Leftrightarrow X \in \mbox{CLTSS} 
\stackrel{\Rightarrow}{\nLeftarrow}
\mathbb{E}X = 0 ~\mbox{and}~ \mathbb{E} \|X\|^{2} < \infty.
\tag{4}
\end{equation}
\end{theorem}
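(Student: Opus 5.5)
The heart of both parts is one implication: in a cotype $2$ space, $X \in \mbox{CLTSS}$ forces $X \in \mbox{CLT}$. The converse $X \in \mbox{CLT} \Rightarrow X \in \mbox{CLTSS}$ is trivial in any Banach space. Everything else will follow from classical characterizations in Ledoux and Talagrand [10].

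\textbf{Step 1: the limit of the subsequence is pregaussian.} Suppose $S_{m_n}/\sqrt{m_n}$ converges weakly to some $\mu$. For each $f \in \mathbf{B}^{*}$, the real variables $f(S_{m_n})/\sqrt{m_n}$ then converge weakly. By Proposition 1 of Song [14] (the real case (1)), this gives $\mathbb{E}f(X)=0$ and $\mathbb{E}f^2(X)<\infty$. By the real CLT, the one-dimensional marginals of $\mu$ are the laws $N(0,\mathbb{E}f^2(X))$. Hence $\mu$ is a centered Gaussian measure on $\mathbf{B}$ with covariance $\mathbb{E}f(X)g(X)$, so $X$ is pregaussian.

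\textbf{Step 2: cotype $2$ gives the second moment.} In a cotype $2$ space, a pregaussian $X$ has $\mathbb{E}\|X\|^2<\infty$. This follows from the known comparison $\mathbb{E}\|X\|^2 \le C\,\mathbb{E}\|G(X)\|^2$ for the associated Gaussian $G(X)$ [10, Chapter 10]. Combined with Step 1, we get $\mathbb{E}X=0$ and $\mathbb{E}\|X\|^2<\infty$.

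\textbf{Step 3: conclude CLT.} By the characterization of CLT in cotype $2$ spaces (Jain; [10, Chapter 10]), $X \in \mbox{CLT}$ if and only if $X$ is pregaussian, $\mathbb{E}X=0$ and $\mathbb{E}\|X\|^2<\infty$. Steps 1 and 2 supply all three conditions, so $X \in \mbox{CLT}$. This proves $\mbox{CLT} \Leftrightarrow \mbox{CLTSS}$ in both (i) and (ii).

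\textbf{Part (i).} Here $\mathbf{B}$ is also of type $2$. Then $\mathbb{E}X=0$ and $\mathbb{E}\|X\|^2<\infty$ imply $X \in \mbox{CLT}$ (Hoffmann-J{\o}rgensen–Pisier). The reverse implication was obtained in Steps 1–2, which gives (3). In fact, by Kwapie\'n's theorem $\mathbf{B}$ is isomorphic to a Hilbert space in this case.

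\textbf{Part (ii).} The forward direction $\mbox{CLTSS} \Rightarrow (\mathbb{E}X=0,\ \mathbb{E}\|X\|^2<\infty)$ again comes from Steps 1–2. For the failure of the reverse implication, I would use the converse of the type $2$ CLT theorem (Hoffmann-J{\o}rgensen–Pisier / Jain [8]): if every centered $X$ with $\mathbb{E}\|X\|^2<\infty$ satisfies the CLT, then $\mathbf{B}$ is of type $2$. Since $\mathbf{B}$ is not of type $2$, there is some $X$ with $\mathbb{E}X=0$ and $\mathbb{E}\|X\|^2<\infty$ but $X\notin\mbox{CLT}$. By the equivalence already proved, this $X\notin\mbox{CLTSS}$, which gives (4).

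\textbf{Main obstacle.} The delicate point is Step 1: passing from weak convergence along a subsequence to pregaussianity. The identification of the limit through finite-dimensional marginals must be handled carefully, using the tightness that comes with weak convergence along $m_n$. The later steps then rest on invoking the right cotype $2$ results from [10].
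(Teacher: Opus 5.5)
Your proposal is correct and follows essentially the paper's route: Lemma 1 (CLTSS $\Rightarrow$ pre-Gaussian via Song's real-case result), then Jain's theorem that pre-Gaussian variables satisfy the CLT in cotype $2$ spaces, then Hoffmann-J{\o}rgensen--Pisier and its converse to get parts (i) and (ii). The only cosmetic difference is that you obtain $\mathbb{E}\|X\|^2<\infty$ directly from the cotype $2$ Gaussian comparison $\mathbb{E}\|X\|^2\le C\,\mathbb{E}\|G(X)\|^2$, whereas the paper obtains it afterwards from $X\in\mbox{CLT}$ via Jain [7].
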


\begin{remark}
 Let $\mathbf{B}$ be of cotype $2$ space. Then, from Theorem 1, we see that
 $X \in \mbox{CLT} \Leftrightarrow X \in \mbox{CLTSS}$.  As we know, $l^{p}$ spaces with 
 $1 \leq p \leq 2$ are cotype $2$.  More generally, $L^{p}$ spaces (which include $l^{p}$) 
 with $1 \leq p \leq 2$ are also cotype $2$. Thus, from our Theorem 1, our answer to 
 Problem 1 is negative. 
\end{remark}

\begin{remark}
	Kwapi\'{e}n [9, Proposition 3.1] showed that a Banach space is isomorphic to 
	a Hilbert space if and only if it is of both cotype $2$ and type $2$ space.
	Thus, combining this result and our Theorem 1 (i), we conclude that if Banach space $\mathbf{B}$ is isomorphic to a Hilbert space, then (3) holds.
\end{remark}

\begin{remark}
From the conclusion of Theorem 1 (ii), we see that $\mathbb{E}X = 0$ and 
 $\mathbb{E}\|X\|^{2} < \infty$ are not sufficient for $X \in \mbox{CLTSS}$ 
 to hold if $\mathbf{B}$ is of cotype $2$ and not of type $2$ space. 
 Since $X \in \mbox{CLT} \Rightarrow X \in \mbox{CLTSS}$, and $\mathbb{E}X = 0$ and $\mathbb{E}\|X\|^{2} < \infty$ are not necessary for $X \in \mbox{CLT}$ to hold, $\mathbb{E}X = 0$ and $\mathbb{E}\|X\|^{2} < \infty$ are also not necessary conditions for $X \in \mbox{CLTSS}$ to hold. 
Thus, our answer to Problem 2 is positive; that is, $\mathbb{E}X = 0$ and $\mathbb{E}\|X\|^{2} < \infty$ are neither sufficient 
 nor necessary for $X \in \mbox{CLTSS}$ to hold.
\end{remark}

\begin{theorem}
	Let $X$ be a $\mathbf{B}$-valued random variable such that $X \in \mbox{CLTSS}$ and 
	$X \notin \mbox{CLT}$. Then $\displaystyle \left(S_{n}/a_{n} \right)_{n \in \mathbb{N}}$ 
	does not converge weakly to a non-degenerate probability measure for 
	any sequence $\{a_{n}; n \in \mathbb{N} \}$ of positive real numbers.
\end{theorem}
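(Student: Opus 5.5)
Argue by contradiction: assume there is a sequence $\{a_n; n \in \mathbb{N}\}$ of positive reals with $S_n/a_n \Rightarrow \mu$, $\mu$ non-degenerate. The tool is the convergence of types theorem in $\mathbf{B}$: if $Y_n \Rightarrow \mu$ and $c_n Y_n \Rightarrow \nu$ with $\mu,\nu$ non-degenerate and $c_n>0$, then $c_n \to c \in (0,\infty)$ and $\nu = \mu\circ(c^{-1}\cdot)$. It reduces to the real case by choosing $f \in \mathbf{B}^*$ with $\mu\circ f^{-1}$ non-degenerate; such $f$ exists because a Borel probability measure on a separable Banach space is determined by its one-dimensional projections, so if every $\mu \circ f^{-1}$ were a point mass then $\mu$ would be a point mass. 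Applying the real theorem to $f(Y_n)$ gives $c_n \to c$, and then $c_nY_n \Rightarrow c\,\mu$.

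The argument runs in four steps.

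First, $X \in \mbox{CLTSS}$ gives $\{m_n\}$ with $S_{m_n}/\sqrt{m_n} \Rightarrow \nu$. The limit $\nu$ is non-degenerate: a degenerate $\nu$ would mean $S_{m_n}/\sqrt{m_n} \to b$ in probability, and symmetrization then gives $X \equiv$ const, which is a trivial case we set aside.

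Second, since also $S_{m_n}/a_{m_n} \Rightarrow \mu$, convergence of types gives $a_{m_n}/\sqrt{m_n} \to c \in (0,\infty)$.

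Third, we show $a_n/\sqrt n \to c$ along the full sequence. The plan is to exploit the stability forced by i.i.d. sums. Since $S_n/a_n \Rightarrow \mu$ with $\mu$ non-degenerate, the standard argument (as in the real case, via $S_{2n}$ being a sum of two independent copies of $S_n$, and more generally $S_{kn}$) shows that $\mu$ is strictly stable and that $a_n$ is regularly varying, $a_n = n^{1/\alpha}L(n)$. Combined with the subsequence information $a_{m_n} \sim c\sqrt{m_n}$, this should force $\alpha = 2$. Directly: $S_{km}/a_{km} \Rightarrow \mu$, and $S_{km}/a_m \Rightarrow \mu^{*k}$, so convergence of types gives $a_{km}/a_m \to \lambda_k$ with $\mu^{*k} = \mu(\lambda_k^{-1}\cdot)$. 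Along $m=m_n$, the subsequence CLT gives $S_{km_n}/\sqrt{m_n} \Rightarrow \nu^{*k}$, and $\nu = c\,\mu$ (from the second step), so $\nu$ is stable of the same index; $\nu$ is a Gaussian limit of $S_{m_n}/\sqrt{m_n}$, forcing $\lambda_k = \sqrt{k}$.

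Fourth, we upgrade to $a_n/\sqrt n \to c$. The standard real-variable argument for a positive sequence with $a_{kn}/a_n \to \sqrt{k}$ for every $k$ and the tightness constraint $a_{n+1}/a_n \to 1$ (which follows since $X_{n+1}/a_n \to 0$ and convergence of types) gives $a_n = \sqrt n\,L(n)$ with $L$ slowly varying; combined with $L(m_n)\to c$ this still does not force $L \to c$.

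So the cleaner route, which I would actually pursue, avoids the fourth step: once $\mu$ is known to be Gaussian, say $\gamma$, use the fact that $S_n/a_n \Rightarrow \gamma$ puts $X$ in the domain of attraction of a Gaussian measure, with the normalization given by truncated second moments. In a cotype $2$ setting, or in general via one-dimensional projections $f(X)$ (real random variables in the Gaussian domain of attraction whose normalizing constants agree with $c\sqrt{m_n}$ along the subsequence, hence have finite variance by Song's Proposition 1, so $a_n \sim c\sqrt n$), we get $a_n/\sqrt n \to c$. Then $S_n/\sqrt n = (a_n/\sqrt n)(S_n/a_n) \Rightarrow c\,\mu$, so $X \in \mbox{CLT}$, a contradiction.

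The main obstacle is identifying $a_n \sim c\sqrt n$ on the whole sequence rather than just along $\{m_n\}$. I would handle it by projecting onto a single $f \in \mathbf{B}^*$ with $\mu\circ f^{-1}$ non-degenerate and applying the one-dimensional theory: $f(X) \in \mbox{CLTSS}$ in $\mathbb{R}$, hence $\mathbb{E}f(X)^2<\infty$ by (1), so $f(S_n)/\sqrt n$ converges to a non-degenerate normal. Convergence of types applied to $f(S_n)/a_n$ versus $f(S_n)/\sqrt n$ then yields $a_n/\sqrt n \to c$ directly. This makes the third step unnecessary and gives a short proof.
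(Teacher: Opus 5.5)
Your final route is correct and is essentially the paper's proof, so the stability and regular-variation detour in your third and fourth steps can be dropped. The route is: pick $f\in\mathbf{B}^*$ with $\mu\circ f^{-1}$ non-degenerate, use Song's Proposition 1 on $f(X)\in\mbox{CLTSS}$ to obtain the full-sequence real CLT for $f(S_n)/\sqrt{n}$, apply convergence of types (which is exactly the paper's Lemma 2) to get $a_n/\sqrt{n}\to c$, and conclude $X\in\mbox{CLT}$. Your choice of $f$ is slightly more economical than the paper's, which asks for both $f(U)$ and $f(V)$ to be non-zero, but you should state why the normal limit is non-degenerate: $f(X)=0$ a.s. would force $\mu\circ f^{-1}=\delta_0$.
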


\begin{remark}
Clearly, Theorem 2 provides a negative answer to Problem 3 in Section 1.
For the counterexample constructed by Song [14],  $X \in \mbox{CLTSS}$ 
and $X \notin \mbox{CLT}$, by Theorem 2, we conclude that $\displaystyle \left(S_{n}/a_{n} \right)_{n \in \mathbb{N}}$ 
does not converge weakly to a non-degenerate probability measure for 
any sequence $\{a_{n}; n \in \mathbb{N} \}$ of positive real numbers.
\end{remark}

\section{Proof of Theorem 1}

To prove Theorem 1, we first introduce two definitions. 
A $\mathbf{B}$-valued random variable $G$ is said to be Gaussian 
if the real-valued random variable $f(G)$ has a normal distribution for all
$f \in \mathbf{B}^{*}$ (see, e.g., Ledoux and Talagrand [10, page 55]).
A $\mathbf{B}$-valued random variable $X$ is said to be 
pre-Gaussian if $\mathbb{E}f(X) = 0$ and $\mathbb{E}f^{2}(X) < \infty$
for all $f \in \mathbf{B}^{*}$ and there exists a $\mathbf{B}$-valued Gaussian 
random variable $G$ such that $\mathbb{E}f^{2}(X) = \mathbb{E}f^{2}(G)$ for 
all $f \in \mathbf{B}^{*}$ (see, e.g., Ledoux and Talagrand [10, page 260]).

For establishing Theorem 1, we need the following auxiliary result.

\begin{lemma}
	Let $X$ be a $\mathbf{B}$-valued random variable such that $X \in \mbox{CLTSS}$. 
	Then $X$ is pre-Gaussian.
\end{lemma}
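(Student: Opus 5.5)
Suppose $S_{m_n}/\sqrt{m_n}$ converges weakly to some probability measure $\mu$ on $\mathbf{B}$. The plan is to reduce to the real line through linear functionals, and then identify $\mu$ as the law of a Gaussian variable whose covariance matches that of $X$.

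\textbf{Step 1 (one functional at a time).} Fix $f \in \mathbf{B}^{*}$. By the continuous mapping theorem, $f(S_{m_n})/\sqrt{m_n} = \sum_{i=1}^{m_n} f(X_i)/\sqrt{m_n}$ converges weakly to $\mu\circ f^{-1}$. The variables $f(X_i)$ are real i.i.d., so $f(X) \in \mbox{CLTSS}$ in $\mathbb{R}$. Proposition 1 of Song [14], quoted as (1), then gives $\mathbb{E}f(X)=0$ and $\mathbb{E}f^2(X)<\infty$. The classical real CLT applied along the subsequence identifies $\mu\circ f^{-1} = N(0,\mathbb{E}f^2(X))$.

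\textbf{Step 2 (the limit is Gaussian).} Let $G$ be a $\mathbf{B}$-valued random variable with law $\mu$; this exists because $\mathbf{B}$ is separable and the probability space is rich. By Step 1, $f(G)$ is normal with mean $0$ and variance $\mathbb{E}f^2(X)$ for every $f\in\mathbf{B}^{*}$. Hence $G$ is Gaussian in the sense of [10, p.~55], and $\mathbb{E}f^2(G)=\mathbb{E}f^2(X)$ for all $f$. This is exactly the definition of $X$ being pre-Gaussian.

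\textbf{Main obstacle.} Step 1 is the only nontrivial point. It is where real-line tightness along the subsequence has to force finiteness of the second moment, and this is the content of Song's Proposition 1. If I could not cite it directly, I would prove it by symmetrization. Tightness of $f(S_{m_n}-S'_{m_n})/\sqrt{m_n}$ together with Lévy's inequality gives $m_n\mathbb{P}(|f(X)|>t\sqrt{m_n})$ bounded uniformly in $n$. The characteristic-function argument $(\varphi(t/\sqrt{m_n}))^{m_n}\to$ a limit that is nonzero near $0$ then gives that $m_n\,\mathbb{E}\bigl[f(X)^2\wedge m_n\bigr]/m_n$ stays bounded. Since $m_n\to\infty$, this implies $\mathbb{E}f^2(X)<\infty$. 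With finite variance, the weak law along the subsequence forces $\mathbb{E}f(X)=0$.

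Everything else in the argument is routine.
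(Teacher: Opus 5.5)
Your proposal is correct and is essentially the paper's own proof. Both arguments push the subsequential limit through each $f \in \mathbf{B}^{*}$, invoke Song's Proposition 1 to get $\mathbb{E}f(X)=0$ and $\mathbb{E}f^{2}(X)<\infty$, identify $\mu\circ f^{-1}$ as $N(0,\mathbb{E}f^{2}(X))$ via the real CLT, and conclude that a variable with law $\mu$ is Gaussian with the same covariance as $X$; the only addition is your sketch of Song's result via symmetrization and characteristic functions, which the paper simply cites.
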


\vskip 0.3cm

\noindent 
{\bf Proof.}~ Since $X \in \mbox{CLTSS}$, there exists a subsequence 
$\{m_{n}; ~n \in \mathbb{N} \}$ of $\mathbb{N}$ and a $\mathbf{B}$-valued 
random variable $U$ such that 
\begin{equation}
\left(\frac{S_{m_{n}}}{\sqrt{m_{n}}} \right)_{n \in \mathbb{N}} ~
\mbox{converges weakly to} ~U  
\tag{5}
\end{equation}
and hence, 
\begin{equation}
\left(f\left(\frac{S_{m_{n}}}{\sqrt{m_{n}}}\right) \right)_{n \in \mathbb{N}} ~
\mbox{converges weakly to} ~f(U)~\mbox{for all}~ f \in \mathbf{B}^{*}.
\tag{6}
\end{equation}
Note that, for every given $f \in \mathbf{B}^{*}$,
\[
\frac{\sum_{i=1}^{m_{n}}f\left(X_{i}\right)}{\sqrt{m_{n}}} =f\left(\frac{S_{m_{n}}}{\sqrt{m_{n}}}\right)
\]
and $\{f(X), f(X_{n}); n \in \mathbb{N} \}$ is a sequence of i.i.d. real-valued random variables. 
Thus, it follows from (6) that
\begin{equation}
\left(\frac{\sum_{i=1}^{m_{n}}f\left(X_{i}\right)}{\sqrt{m_{n}}} \right)_{n \in \mathbb{N}} ~
\mbox{converges weakly to} ~f(U)~\mbox{for all}~ f \in \mathbf{B}^{*};
\tag{7}
\end{equation}
i.e., $f(X) \in \mbox{CLTSS}$ for all $f \in \mathbf{B}^{*}$. Thus, by Proposition 1 of
Song [14], we have
\[
\mathbb{E}f(X) = 0 ~ \mbox{and}~ \mathbb{E}f^{2}(X) < \infty
~\mbox{for all}~ f \in \mathbf{B}^{*}
\]
and hence,
\begin{equation}
\left(\frac{\sum_{i=1}^{m_{n}}f\left(X_{i}\right)}{\sqrt{m_{n}}} \right)_{n \in \mathbb{N}} ~
\mbox{converges weakly to}~ N \left(0, \mathbb{E}f^{2}(X)\right)~\mbox{for all}~ f \in \mathbf{B}^{*}. \tag{8}
\end{equation}
Now (7) and (8) together ensure that the real-valued random variable $f(U)$ has 
a normal distribution with mean of $0$ and variance of $\mathbb{E}f^{2}(X)$ for all
$f \in \mathbf{B}^{*}$. Hence, we see that $U$ is a Gaussian $\mathbf{B}$-valued random variable
satisfying 
\[
\mathbb{E}U = 0 ~\mbox{and}~\mathbb{E}f^{2}(X) = \mathbb{E}f^{2}(U) ~\mbox{for all}~ f \in \mathbf{B}^{*}
\]
which ensures that $X$ is a pre-Gaussian $\mathbf{B}$-valued random variable. 
$\Box$

\vskip 0.2cm

\noindent {\bf Proof of Theorem 1.}~We first show that 
\begin{equation}
X \in \mbox{CLT} \Leftarrow X \in \mbox{CLTSS}  \tag{9}
\end{equation} 
if $\mathbf{B}$ is of cotype $2$ space. Since $X \in \mbox{CLTSS}$, it follows from Lemma 1 that $X$ is a 
pre-Gaussian $\mathbf{B}$-valued random variable. By a theorem of Jain [8] (see, e.g., Ledoux and Talagrand [10, Theorem 10.7]), $\mathbf{B}$ is of cotype $2$ space if and only if 
$X \in \mbox{CLT}$ whenever $X$ is pre-Gaussian. So we conclude immediately that, under the assumption that 
$\mathbf{B}$ is of cotype $2$ space, (9) holds and hence, $X \in \mbox{CLT} \Leftrightarrow X \in \mbox{CLTSS}$. 
Note that Jain [7] showed that
\[
X \in \mbox{CLT} \Rightarrow \mathbb{E}X = 0 ~\mbox{and}~ \mathbb{E} \|X\|^{2} < \infty
\]
if $\mathbf{B}$ is of cotype $2$ space. Thus, for cotype $2$ space $\mathbf{B}$, we have the property
\begin{equation}
X \in \mbox{CLT} \Leftrightarrow X \in \mbox{CLTSS} 
\Rightarrow \mathbb{E}X = 0 ~\mbox{and}~ \mathbb{E} \|X\|^{2} < \infty. 
\tag{10}
\end{equation}
Now note that Hoffmann-J{\o}rgensen and Pisier [6] showed that, for $\mathbf{B}$
to have the property
\begin{equation}
X \in \mbox{CLT~~whenever} ~\mathbb{E}X = 0 ~\mbox{and}~ \mathbb{E} \|X\|^{2} < \infty  \tag{11}
\end{equation}
it is necessary and sufficient that $\mathbf{B}$ must be of type $2$ space. Thus, 
we deduce the following two conclusions:

\noindent
{\bf (i).} If $\mathbf{B}$ is of both cotype $2$ and type $2$ space, then (3) follows from (10) and (11).
 
\noindent
{\bf (ii).} If $\mathbf{B}$ is of both cotype $2$ and non-type $2$ space, then it
follows from (10) and (11) that
\[
\mathbb{E}X = 0 ~\mbox{and}~ \mathbb{E} \|X\|^{2} < \infty
\nRightarrow X \in \mbox{CLT} \Leftrightarrow X \in \mbox{CLTSS} 
\Rightarrow \mathbb{E}X = 0 ~\mbox{and}~ \mathbb{E} \|X\|^{2} < \infty;
\]
i.e., (4) holds. The proof of Theorem 1 is complete. ~$\Box$

\section{Proof of Theorem 2}

The following preliminary lemma is used to prove Theorem 2.

\begin{lemma}
Let $\{Y_{n}; n \in \mathbb{N} \}$ be a sequence of $\mathbb{B}$-valued random variables
such that 
\begin{equation}
\left(Y_{n} \right)_{n \in \mathbb{N}} ~\mbox{converges weakly to some $\mathbf{B}$-valued random variable} ~W_{1}~
\mbox{with}~ \mathbb{P}\left(W_{1} = 0\right) < 1.
\tag{12}
\end{equation} 
Let $\{\alpha_{n}; n \in \mathbb{N} \}$ be a sequence of positive real numbers such that
\begin{equation}
\left(\alpha_{n}Y_{n} \right)_{n \in \mathbb{N}} ~\mbox{converges weakly to some $\mathbf{B}$-valued random variable} ~W_{2}~
\mbox{with}~ \mathbb{P}\left(W_{2} = 0\right) < 1.
\tag{13}
\end{equation} 
Then there exists $0 < c < \infty$ such that
\begin{equation}
\lim_{n \rightarrow \infty} \alpha_{n} = c.
\tag{14}
\end{equation}
\end{lemma}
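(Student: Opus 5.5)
This is a convergence-of-types argument. Because the claim is only about weak limits, I would reduce it to the scalar statement that every subsequential limit point of $\{\alpha_n\}$ in $[0,\infty]$ equals one and the same number $c\in(0,\infty)$.

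\textbf{Step 1: boundedness.} Suppose $\alpha_{n_k}\to\infty$ along some subsequence. Since $(Y_n)$ converges weakly, it is uniformly tight. By (13), $(\alpha_{n_k}Y_{n_k})$ is also tight. For any $\epsilon>0$ and $M>0$,
\[
\mathbb{P}(\|Y_{n_k}\|>\epsilon) = \mathbb{P}(\|\alpha_{n_k}Y_{n_k}\|>\alpha_{n_k}\epsilon) \le \mathbb{P}(\|\alpha_{n_k}Y_{n_k}\|>M)
\]
once $\alpha_{n_k}\epsilon>M$. Tightness makes the right side small, so $Y_{n_k}\to 0$ in probability. Hence $W_1=0$ almost surely, contradicting $\mathbb{P}(W_1=0)<1$.

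\textbf{Step 2: bounded away from zero.} Suppose $\alpha_{n_k}\to 0$. Tightness of $(Y_{n_k})$ gives $\alpha_{n_k}Y_{n_k}\to 0$ in probability, since $\mathbb{P}(\|\alpha_{n_k}Y_{n_k}\|>\epsilon)\le \mathbb{P}(\|Y_{n_k}\|>M)$ once $\alpha_{n_k}\le\epsilon/M$. Thus $W_2=0$ almost surely, a contradiction. So all limit points of $\{\alpha_n\}$ lie in $(0,\infty)$.

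\textbf{Step 3: uniqueness of the limit point.} If $\alpha_{n_k}\to a\in(0,\infty)$, then by Slutsky's lemma (scalar multiplication is continuous, and $(\alpha_{n_k}-a)Y_{n_k}\to0$ in probability by tightness) we get $\alpha_{n_k}Y_{n_k}\Rightarrow aW_1$. Hence $W_2\stackrel{d}{=}aW_1$. If $b$ is another limit point, then $aW_1\stackrel{d}{=}bW_1$, i.e.\ $W_1\stackrel{d}{=}\lambda W_1$ with $\lambda=b/a$. I would reduce this to the real line: pick $f\in\mathbf{B}^*$ with $\mathbb{P}(f(W_1)=0)<1$. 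Such an $f$ exists because the law of $W_1$ is determined by its one-dimensional marginals on a separable space, and $W_1$ is not almost surely $0$.

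\textbf{Step 4: the key step.} With $Z=f(W_1)$ not almost surely zero and $Z\stackrel{d}{=}\lambda Z$, assume without loss of generality $\lambda<1$. Iterating gives $Z\stackrel{d}{=}\lambda^m Z\to 0$ almost surely as $m\to\infty$, so $Z=0$ almost surely, a contradiction. Hence $\lambda=1$, and every subsequence of $\{\alpha_n\}$ has a further subsequence converging to the same $c$, which gives (14).

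I expect Step 4 to be the only point needing care: passing from $W_1$ to a nonzero scalar projection and then ruling out $\lambda\neq 1$. The iteration argument is short, so the remaining steps are routine tightness estimates.
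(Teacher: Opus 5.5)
Your proof is correct and follows the same route as the paper's: use tightness to rule out $0$ and $\infty$ as limit points of $\{\alpha_n\}$, then note that two distinct finite limit points $a\neq b$ would make both $aW_1$ and $bW_1$ weak limits of subsequences of $(\alpha_nY_n)$. Your Steps 3--4 (Slutsky, then the iteration $W_1\stackrel{d}{=}\lambda^m W_1\to 0$) supply a justification the paper omits, since the paper simply asserts that $c_1W_1$ and $c_2W_1$ both being limits ``contradicts (13)''.
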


\noindent 
{\bf Proof.}~ If $\limsup_{n \rightarrow \infty} \alpha_{n} = \infty$, then
exists a subsequence $\{m_{n}; n \in \mathbb{N} \}$ of $\mathbb{N}$ such that
$\lim_{n \rightarrow \infty} \alpha_{m_{n}} = \infty$. Thus, from (13), 
\[
\left(Y_{m_{n}} \right)_{n \in \mathbb{N}} 
= \left(\left(\frac{1}{\alpha_{m_{n}}}\right) \alpha_{m_{n}} Y_{m_{n}} \right)_{n \in \mathbb{N}} 
~\mbox{converges weakly to}~0 
\]
which contradicts (12). Thus, we conclude that $\limsup_{n \rightarrow \infty} \alpha_{n} < \infty$. 
Similarly, from (12) and (13), we conclude that $\liminf_{n \rightarrow \infty} \alpha_{n} > 0$. Therefore, we have
\begin{equation}
0 < \liminf_{n \rightarrow \infty} \alpha_{n} \leq \limsup_{n \rightarrow \infty} \alpha_{n} < \infty.  \tag{15}
\end{equation}

Let $\liminf_{n \rightarrow \infty} \alpha_{n} = c_{1}$ and $\limsup_{n \rightarrow \infty} \alpha_{n} = c_{2}$. 
Then, from (15), we see that $0 < c_{1} \leq c_{2} < \infty$. If $c_{1} < c_{2}$, 
then there exist two subsequences $\{l_{n}; n \in \mathbb{N} \}$ and 
$\{m_{n}; n \in \mathbb{N} \}$ of 
$\mathbb{N}$ such that $\lim_{n \rightarrow \infty} \alpha_{l_{n}} = c_{1}$ and 
$\lim_{n \rightarrow \infty} \alpha_{m_{n}} = c_{2}$ and hence, from (12), 
\[
\left(\alpha_{l_{n}} Y_{l_{n}} \right)_{n \in \mathbb{N}} 
~\mbox{converges weakly to}~c_{1}W_{1} ~~\mbox{and}~~
\left(\alpha_{m_{n}} Y_{m_{n}} \right)_{n \in \mathbb{N}} 
~\mbox{converges weakly to}~c_{2}W_{1}
\]
which contradicts (13). Therefore $c_{1} = c_{2} = c \in (0, \infty)$ and 
(14) holds. ~$\Box$

\vskip 0.2cm

\noindent {\bf Proof of Theorem 2.}~ Since $X \in \mbox{CLTSS}$, there exists a subsequence 
$\{m_{n}; ~n \in \mathbb{N} \}$ of $\mathbb{N}$ and a $\mathbf{B}$-valued 
random variable $U$ such that (5) holds. Since $X \notin \mbox{CLT}$, $U$ 
is nondegenerate and hence, $\mathbb{P}(U = 0) < 1$. 

If the conclusion of Theorem 2 is not true, then there exist a sequence 
$\{a_{n}; n \in \mathbb{N} \}$ of positive real numbers and a nondegenerate
$\mathbf{B}$-valued random variable $V$ (and hence, $\mathbb{P}(V = 0) < 1$) such that
\begin{equation}
 \left(S_{n}/a_{n} \right)_{n \in \mathbb{N}} ~ \mbox{converges weakly to}~ V.
 \tag{16}
\end{equation}

Since both $\mathbb{P}(U = 0) < 1$ and $\mathbb{P}(V = 0) < 1$, there exists a
$f \in \mathbf{B}^{*}$ such that $\mathbb{P}(f(U) = 0) < 1$ and $\mathbb{P}(f(V) = 0) < 1$. Write $W_{1} = f(U)$, $W_{2} = f(V)$, and
\[
Y_{n} = f\left(\frac{S_{n}}{\sqrt{n}} \right), ~\alpha_{n} = \frac{\sqrt{n}}{a_{n}}, ~ n \geq 1.
\]
Then, from (5) and (16) respectively, we have 
\begin{equation}
\left(Y_{m_{n}}\right)_{n \in \mathbb{N}} ~\mbox{converges weakly to}~W_{1} = f(U)
~\mbox{with}~\mathbb{P}\left(W_{1} = 0 \right) < 1
\tag{17}
\end{equation}
and
\begin{equation}
\left(\alpha_{n}Y_{n}\right)_{n \in \mathbb{N}} ~\mbox{converges weakly to}~W_{2} = f(V)
~\mbox{with}~\mathbb{P}\left(W_{2} = 0 \right) < 1.
\tag{18}
\end{equation}
Applying Proposition 1 of Song [14] and using the same argument used in the proof of Lemma 1, we have
\[
\mathbb{E}f(X) = 0 ~\mbox{and}~ \mathbb{E}f^{2}(X) < \infty
\]
and hence, by the classical CLT in real space, 
\begin{equation}
\left(Y_{n}\right)_{n \in \mathbb{N}} = \left(\frac{\sum_{i=1}^{n}f(X_{i})}{\sqrt{n}} \right)_{n \in \mathbb{N}}
~\mbox{converges weakly to}~N\left(0, \mathbb{E}f^{2}(X)\right).
\tag{19}
\end{equation}
Now (17) and (19) together ensure that
\begin{equation}
\left(Y_{n}\right)_{n \in \mathbb{N}} ~\mbox{converges weakly to}~W_{1} = f(U)
~\mbox{with}~\mathbb{P}\left(W_{1} = 0 \right) < 1.
\tag{20}
\end{equation}
By Lemma 2, it follows from (20) and (18) that
\[
\lim_{n \rightarrow \infty} \alpha_{n} = \lim_{n \rightarrow \infty} 
\frac{\sqrt{n}}{a_{n}} = c~~\mbox{for some}~ 0 < c < \infty.
\]
Thus, from (16), we get
\[
\left(\frac{S_{n}}{\sqrt{n}} \right)_{n \in \mathbb{N}}
= \left(\left(1/\alpha_{n}\right)\left(\frac{S_{n}}{a_{n}}\right) \right)_{n \in \mathbb{N}}~\mbox{converges weakly to}~ (1/c)V
\]
which contradicts $X \notin \mbox{CLT}$. This completes the proof of Theorem 2. ~$\Box$

\section{Conclusions}
In this note two theorems concerning the central limit theorem for subsequences of sums of 
i.i.d. $\mathbf{B}$-valued random variables are established. 
As direct implications of the two established theorems, the three questions raised in 
Section 1 are answered. 

Let $X$ be a $\mathbf{B}$-valued random variable. The interesting finding of this note is that 
$X \in \mbox{CLT}$ and $ X \in \mbox{CLTSS}$ are equivalent if $\mathbf{B}$ is of cotype $2$ 
space. The techniques used in the proof of this result relys upon our Lemma 1 and a theorem of 
Jain [8] (i.e., $\mathbf{B}$ is of cotype $2$ space if and only if $X \in \mbox{CLT}$ whenever 
$X$ is pre-Gaussian). We conjecture that if $\mathbf{B}$ is not of cotype $2$ space, then there 
exists a $\mathbf{B}$-valued random variable $X$ such that $\mathbb{E}X = 0$, 
$\mathbb{E}\|X\|^{2} < \infty$,  $X \notin \mbox{CLT}$, and $X \in \mbox{CLTSS}$.
We believe that the idea provided by Aldous [1] may help prove this conjecture. If this conjecture 
can be proven to be true, then the equivalence of $X \in \mbox{CLT}$ and $ X \in \mbox{CLTSS}$
can be used to characterize cotype $2$ space. 

\vskip 0.5cm

\noindent 
{\bf Author Contributions:} Conceptualization, D. Li and H.-Y. Liang; 
Methodology, D. Li and H.-Y. Liang; Writing-original draft, 
D. Li and H.-Y. Liang; Writing—review \& editing, D. Li and H.-Y. Liang. 
Two authors have read and agreed to the published version of the manuscript.

\vskip 0.3cm

\noindent 
{\bf Fundings:} The research of Deli Li was partially supported by a grant from the Natural Sciences 
and Engineering Research Council of Canada (RGPIN-2025-0573)
and the research of Han-Ying Liang
was partially supported by the National Natural Science Foundation of China (12071348) and  Natural 
Science Foundation of Shanghai, China (25ZR1401345).

\vskip 0.3cm

\noindent 
{\bf Data Availability Statement:} No data was used for the research described in this note.

\vskip 0.3cm

\noindent 
{\bf Conflicts of Interest:} The authors declare that they have no known competing financial interests that could have appeared to influence the work reported in this note.

\vskip 0.5cm

\noindent 
{\bf References}

\begin{enumerate}

\item Aldous, D. A characterization of Hilbert space using the central limit theorem. {\it J. Lond. Math. Soc.} {\bf 1976}, 14, 376-380.

\item Chow, Y. S.; Teicher, H.; Wei, C. Z.; Yu, K. F. Iterated logarithm for random subsequences. {\it Z. Wahrscheinlichkeitstheorie verw. Geb.} {\bf 1981}, 57, 235-251.

\item Fukuyama, K.; Takeuchi, Y. The law of the iterated logarithm for subsequences: a simple proof. {\it Lobachevskii J. Math.} {\bf 2008}, 29, 130-132.

\item Gut, A. On complete convergence in the law of large numbers for subsequences. {\it Ann. Probab.} {\bf 1985}, 13, 1286-1291.

\item Gut, A. Law of the iterated logarithm for subsequences. {\it Probab. Math. Stat.} {\bf 1986}, 7: 27-58.

\item Hoffmann-J{\o}rgensen, J.; Pisier, G. The law of large numbers 
and the central limit theorem in Banach spaces. {\it Ann. Probab.} {\bf 1976}, 4, 587-599.

\item Jain, N.C. Central limit theorem in a Banach space. In {\it Probability in Banach Spaces: Proceedings of the First International Conference on Probability in Banach Spaces, 20–26 July 1975, Oberwolfach}; Beck, A., Ed.; Lecture Notes in Mathematics, Vol. 526; Springer: Berlin/Heidelberg, {\bf 1976}, 113-130.

\item Jain, N.C. Central limit theorem and related questions in Banach spaces. In {\it Proc. Symp. in Pure Mathematics, Vol. XXXI; Amer. Math. Soc.}: Providence, {\bf 1977}, 55-65. 

\item Kwapi\'{e}n, S. Isomorphic characterisations of inner product spaces by orthogonal 
series with vector valued coefficients. {\it Studia Math.} {\bf 1972}, 44, 583-595.

\item Ledoux, M.; Talagrand, M. {\it Probability in Banach Spaces: Isoperimetry and Processes}; Springer-Verlag: Berlin/Heidelberg, {\bf 1991}; ISBN 978-3-642-20211-7. 

\item Li, D.; Klesov, O.; Stoica, G. On the central limit theorem along subsequences 
of sums of i.i.d. random variables. {\it Stat. Pap.} {\bf 2014}, 55, 1035-1045.

\item Qualls, C. (1977). The law of the iterated logarithm on arbitrary sequences
for stationary Gaussian processes and Brownian motion. {\it Ann. Probab.} {\bf 5}: 724-739.

\item Schwabe; Gut, A. On the law of the iterated logarithm for rapidly increasing subsequences. 
{\it Math. Nachr.} {\bf 1996}, 178, 309-320.

\item Song, L. A counterexample in the central limit theorem. 
{\it Bull. Lond. Math. Soc.} {\bf 1999}, 31, 222-230.

\item Weber, M. The law of the iterated logarithm on subsequences-characterizations. {\it Nagoya Math. J.} {\bf 1990}, 118, 65-97.

\end{enumerate}

\end{document}